\title[A $P$-minimal structure without Skolem functions]{An example of a $P$-minimal structure without definable Skolem functions}
\author{Pablo Cubides Kovacsics}
\address{Pablo Cubides Kovacsics \\ Laboratoire de math\'ematiques Nicolas Oresme\\ Universit\'e de Caen\\CNRS UMR 6139 
Universit\'e de Caen BP 5186\\
14032 Caen cedex, France. }
\email{pablo.cubides@unicaen.fr}
\author{Kien Huu Nguyen}
\address{Kien Huu Nguyen \\ Laboratoire Paul Painlev\'e\\ Universit\'e de Lille 1\\ CNRS U.M.R. 8524\\ 59655 Villeneuve d'Ascq Cedex, France.
\\ and \\
Hanoi National University of Education \\
136 XuanThuy str., CAU GIAY 
\\ Hanoi, Vietnam
}
\email{hkiensp@gmail.com}
\newtheorem{theorem}{Theorem}[section]
\newtheorem*{theorem*}{Theorem}
\newtheorem{proposition}[theorem]{Proposition}
\newtheorem*{prop*}{Proposition}
\newtheorem{lemma}[theorem]{Lemma}
\newtheorem{claim}[theorem]{Claim}
\newtheorem{def-theorem}[theorem]{Theorem-Definition}
\newtheorem*{statement*}{Statement}
\theoremstyle{definition}
\newtheorem*{thm*}{Theorem}
\newtheorem{question}{Question}
\newtheorem{definition}[theorem]{Definition}
\def\rv{\operatorname{rv}}
\def\RV{\operatorname{RV}}
\def\VF{\operatorname{VF}}
\newcommand{\Lm}{\mathcal{L}}
\newcommand{\Lring}{\Lm_{\text{ring}}}
\def\NN{{\mathbb N}}
\def\QQ{{\mathbb Q}}
\def\ZZ{{\mathbb Z}}
\def\cF{{\mathcal F}}
\def\cL{{\mathcal L}}
\def\cM{{\mathcal M}}
\def\cO{{\mathcal O}}
\begin{document}
\begin{abstract}
We show there are intermediate $P$-minimal structures between the semi-algebraic and sub-analytic languages which do not have definable Skolem functions. As a consequence, by a result of Mourgues, this shows there are $P$-minimal structures which do not admit classical cell decomposition.     
\end{abstract}

\subjclass[2000]{12J12, 03C99, 12J25}
\keywords{$P$-minimality, Skolem functions, $p$-adically closed fields, cell decomposition, cell preparation.}

\maketitle


In this article we provide an example of a $P$-minimal field without definable Skolem functions. In Section 1, we give a general introduction to $P$-minimality in which we explain the relevance of such an example within its study. The construction of the example together with some final comments will be presented in Section 2. 

\section{Preliminaries}

Hereafter $K$ will denote a $p$-adically closed field, that is, a field elementarily equivalent to a finite extension of $\QQ_p$ in the language of rings $\Lring:=(+,-,\cdot,0,1)$. We use the notation $\Gamma_K$ for the value group of $K$, $v:K\to\Gamma_K\cup\{\infty\}$ for the $p$-adic valuation and $\cO_K$ for the valuation ring of $K$. For a language $\cL$, by $\cL$-definable sets we mean definable by an $\cL$-formula allowing parameters. We sometimes drop the prefix $\cL$ and say \emph{definable} when the ambient language $\cL$ is clear from the context. 


Introduced by Haskell and Macpherson in \cite{haskellETAL:10}, $P$-minimality is a model-theoretic tameness notion for $p$-adically closed fields. It was inspired by \emph{o-minimality}, a similar tameness notion which was initially developed for real-closed fields. As a consequence of their work (see Theorem 2.2 in \cite{haskellETAL:10}), the following definition can be taken as a variant of their original formulation.  
\begin{definition}
Let $\cL$ be a language extending $\Lring$ and $K$ be a $p$-adically closed field. The structure $(K,\cL)$ is $P$-minimal if for every structure $(K',\cL)$ elementarily equivalent to $(K,\cL)$, every $\cL$-definable subset $X\subseteq K'$ is $\Lring$-definable. 
\end{definition}


By results of Macintyre in \cite{macintyre:76}, which were later extended by Prestel and Roquette in \cite{pre-ro-84}, $p$-adically closed fields in $\Lring$ are $P$-minimal. Another source of examples of $P$-minimal fields comes from adding analytic structure to a given $p$-adically closed field. Let us mention some of these examples. 

Recall that Macintyre's language $\cL_{mac}$ is $\Lring$ extended with unary predicates $P_n$ for each integer $n>0$, which are interpreted in $K$ by the sets of $n^{\text{th}}$-powers $P_n := \{y^n \in K : y \in K^\times\}$. For $K$ a finite extension of $\QQ_p$, the subanalytic language $\cL_{an}$ on $K$ is the language $\cL_{Mac}$ enriched with the field inverse $^{-1}$ extended to $K$ by setting $0^{-1}=0$ and, for each convergent power series $f: \mathcal{O}_K^n\to K$, a function symbol for the restricted analytic function
\begin{equation*}
x\in K^n\mapsto
\begin{cases}
f(x) & \text{ if } x\in \mathcal{O}_K^n\\
0 & \text{otherwise.}
\end{cases}
\label{eq:subanalytic}
\end{equation*}

By a result of Haskell, Macpherson and van den Dries in \cite{HaskellMacVDD:99}, $(K,\cL_{an})$ is $P$-minimal. Variants of $\cL_{an}$ for non-standard $p$-adically closed fields (i.e., $K$ not being a finite extension of $\QQ_p$) were shown to be $P$-minimal by Cluckers and Lipshitz in \cite{CLip}. 

\

Something which all previously given examples have in common is that they all satisfy cell decomposition and cell preparation. Before discussing how these notions are related to the existence of definable Skolem functions, let us remind the reader what cell decomposition and cell preparation mean for us in this article. We will work relative to a given collection of functions $\cF$. The first step is to define cells over $\cF$:

\begin{definition}[Cells]\label{def:cells} Let $\cF$ be a family of functions on $K$. 
\item An $\cF$-cell $A \subseteq K$ is a (nonempty) set of the form
\begin{equation*}
\{t \in K : v(\alpha) \ \square_1 \ v(t - c) \ \square_2 \ v(\beta), t - c \in \lambda P_n\}
\end{equation*}
with $\lambda, c \in K$, $\alpha,\beta\in K^\times$, and $\square_i$ either $<$ or $\emptyset$ (i.e., `no condition').  
\item An $\cF$-cell $A \subseteq K^{m+1}$, $m \geq 1$, is a set of the form
\begin{equation*}
\{(x, t) \in K^m\times K : x\in D, v(\alpha(x)) \ \square_1 \ v(t - c(x)) \ \square_2 \ v(\beta(x)),
t - c(x) \in \lambda P_n\},
\end{equation*}
with $D= \pi(A)$ an $\cF$-cell (where $\pi:K^{m+1}\to K^m$ denotes the projection onto the first $m$ coordinates) and 
$\alpha,\beta : K^m \to K^\times$ and $c : K^m \to K$ functions in $\cF$. We call $c$ the
center and $\lambda P_n$ the coset of the $\cF$-cell $A$.
\end{definition} 

We can now give a definition of cell decomposition and cell preparation relative to a family of functions $\cF$.  

\begin{definition}\label{def:celldecomp} Let $\cF$ be a collection of functions. 
\begin{enumerate}
\item $(K,\cL)$ has \emph{cell decomposition over $\cF$} if every definable set can be partitioned into finitely many $\cF$-cells. 
\item $(K,\cL)$ has \emph{cell preparation over $\cF$} if given definable functions $f_j:X\subseteq K^{m+1}\to K$ for $j=1,\ldots,r$, there exists a finite partition of $X$ into $\cF$-cells $A$, such that if $A$ has center $c : K^m \to K$ and coset $\lambda P_n$ with $\lambda\neq 0$, for each $(x, t) \in A$
\end{enumerate}
\begin{equation*}
v(f_j (x, t)) = v(\delta_j (x))+\frac{a_j(v(t - c(x))-v(\lambda))}{n} \hspace{1cm} \text{for each $j = 1, \ldots , r$},
\end{equation*}
with $a_j$ an integer, and $\delta_j : K^m \to K$ a function in $\cF$. If $\lambda = 0$ we just have that $v(f_j (x, t)) = v(f_j (x, c(x))) = v(\delta_j (x))$. When $m=0$, a function $K^0\to K$ is assumed to be a single element of $K$. 
\end{definition} 

Clearly, if $(K,\cL)$ has cell preparation over $\cF$ it also has cell decomposition over $\cF$. What is classically referred to as semi-algebraic (resp. analytic) cell decomposition, would, in our notation, correspond to cell preparation over the class of continuous $\Lring$-definable functions (resp. over the class of analytic functions which are definable in $\cL_{an}$). Denef proved, in his foundational article \cite{denef-86}, that $p$-adically closed fields $(K,\Lring)$ have semi-algebraic cell decomposition. Cluckers in \cite{clu-2003}, showed that $(K,\cL_{an})$ has analytic cell decomposition. Similar results were proven in \cite{CLip} by Cluckers and Lipshitz for variants of $p$-adically closed fields with analytic structure. 

It is therefore natural to ask whether similar theorems could be generalized to $P$-minimal structures, a question which was already raised in \cite{haskellETAL:10}. A first (partial) answer was given by Mourgues in \cite{mou-09}, where she proved the following result: 

\begin{theorem}[Mourgues]\label{thm:mourgues} Let $(K,\cL)$ be a $P$-minimal field. The following are equivalent:
\begin{enumerate}
\item $K$ has definable Skolem functions. 
\item $K$ has cell decomposition over the class of continuous $\cL$-definable functions. 
\end{enumerate}
\end{theorem}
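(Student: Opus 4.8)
The plan is to establish both implications, with $(2)\Rightarrow(1)$ being the routine one and $(1)\Rightarrow(2)$ carrying the real content. Recall that $K$ has definable Skolem functions exactly when every $\cL$-definable $X\subseteq K^{m+1}$ admits an $\cL$-definable $s\colon\pi(X)\to K$ with $(x,s(x))\in X$ for all $x\in\pi(X)$, where $\pi\colon K^{m+1}\to K^m$ is the projection onto the first $m$ coordinates.

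For $(2)\Rightarrow(1)$ I would read a choice function off the cell structure, reproducing the mechanism by which semialgebraic cell decomposition yields Skolem functions in \cite{denef-86}. Given $X$ with $D=\pi(X)$, partition $X$ into finitely many cells $A_1,\dots,A_k$ (as in Definition~\ref{def:cells}), fix this ordering, and for $x\in D$ let $i(x)$ be the least index with $(A_{i(x)})_x\neq\emptyset$; this is $\cL$-definable. On a cell with coset parameter $\lambda=0$ the fibre is the singleton $\{c(x)\}$, so its center is already a choice. On a cell with $\lambda\neq0$ the fibre consists of those $t$ with $t-c(x)\in\lambda P_n$ whose valuation $v(t-c(x))$ lies in a prescribed sub-interval of the coset $v(\lambda)+n\Gamma_K$; since $\Gamma_K$ is discretely ordered and this coset meets every interval of length $n$, I may single out a canonical admissible valuation definably and then a canonical representative inside the relevant $P_n$-coset. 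Gluing these selections along $i(x)$ produces the required Skolem function.

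For $(1)\Rightarrow(2)$ I would induct on $m$. When $m=0$, $P$-minimality makes $X\subseteq K$ a finite union of points and annuli, which are precisely the one-variable cells. For the step, take $X\subseteq K^{m+1}$ with $D=\pi(X)$; by $P$-minimality each fibre $X_x\subseteq K$ is a finite union of points and annuli, and the uniform finiteness inherent in $P$-minimality bounds the number of pieces, so $D$ splits $\cL$-definably according to the combinatorial type of the fibre. The annular pieces must be equipped with centers, and this is exactly where definable Skolem functions are indispensable: they let me choose, uniformly in $x$, a point $c(x)$ in the inner ball of each annulus, along with definable functions $\alpha(x),\beta(x)$ recording the two valuation bounds. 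At this stage $X$ has been cut into sets of cell shape whose centers and bounds are \emph{definable} but not yet continuous.

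The hard part will be upgrading these definable data to the \emph{continuous} centers that Definition~\ref{def:cells} requires. Here I would invoke generic continuity for $P$-minimal structures (see \cite{haskellETAL:10}): a definable function is continuous off a definable set of strictly smaller dimension. Applying this to $c,\alpha,\beta$ and iterating, I refine the decomposition of the base $D\subseteq K^m$—furnished by the inductive hypothesis in dimension $m$—into cells on which all three functions are simultaneously continuous, dealing with the successive lower-dimensional exceptional loci by descending induction on $\dim D$. Building the last-coordinate cells over this refined base decomposition then yields cell decomposition of $X$ over the continuous $\cL$-definable functions. The two persistent difficulties are keeping every fibrewise choice uniform in $x$, which $P$-minimality supplies, and verifying that once one restricts to the continuity locus the defining (in)equalities still carve out genuine cells rather than collapsing to lower-dimensional sets.
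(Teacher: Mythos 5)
First, a point of reference: the paper you were given does not prove this theorem at all --- it is quoted as a known result of Mourgues \cite{mou-09}. So your proposal can only be judged against the argument in that reference (and the classical inputs it rests on), not against anything in this paper.

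Your direction $(2)\Rightarrow(1)$ has a genuine gap at its central step. Once you have fixed an admissible valuation $\gamma$, the set $\{t : v(t-c(x))=\gamma,\ t-c(x)\in\lambda P_n\}$ is a finite union of balls, and a ball has no ``canonical'' point: picking an element of it definably and uniformly in the parameters is exactly the problem of definable Skolem functions for the semialgebraic reduct, which is a theorem of van den Dries (proved model-theoretically, by showing that definable closures are elementary submodels), not a consequence of $\Gamma_K$ being discretely ordered. The same issue already appears one step earlier: to realize the least admissible valuation you must produce a field element whose valuation is roughly $v(\alpha(x)/\lambda)/n$, i.e.\ extract an $n$-th root, which is only determined up to the finite group $\mu_n(K)$ --- again a definable-choice problem, not a canonical one. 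The correct repair is short: the fibre of a cell is $\Lring$-definable uniformly in the cell data $(\alpha(x),\beta(x),c(x),\lambda)$, so one applies van den Dries's $\Lring$-definable Skolem function to these parameters and composes with the $\cL$-definable functions $\alpha,\beta,c$. Citing \cite{denef-86} does not cover this: Denef's paper provides cell decomposition, not definable sections, and as your own difficulty shows, sections do not fall out of cells ``mechanically.''

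In direction $(1)\Rightarrow(2)$ the architecture (Skolem functions to choose fibre data uniformly, then generic continuity plus descending induction on dimension to make centers continuous) is essentially the right one, but two steps as written are wrong or missing. The base case claim that $P$-minimality makes $X\subseteq K$ ``a finite union of points and annuli'' is false: the set $P_2$ of nonzero squares is $\Lring$-definable, its set of attained valuations is exactly the even elements of the value group, and no finite union of points and annuli has that property. One-variable cells necessarily carry the coset conditions $t-c\in\lambda P_n$, and these must be tracked through the whole induction; your inductive step mentions only ``annular pieces'' and silently drops them. Second, the inductive step needs a compactness consequence of $P$-minimality made explicit: there are finitely many $\Lring$-formulas $\theta_i(t,\bar{y})$ such that every fibre $X_x$ equals $\theta_i(K,\bar{b})$ for some $i$ and some parameters $\bar{b}$; the role of Skolem functions is to choose $\bar{b}=h_i(x)$ definably, after which Denef's cell decomposition applied to $\theta_i$ \emph{uniformly in} $\bar{y}$ supplies the cell shapes and continuous semialgebraic centers to be composed with $h_i$. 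Choosing ``a point in the inner ball of each annulus'' is not an adequate substitute: a valid center must also be compatible with the coset condition (it has to lie sufficiently deep in the hole), and the bounds $\alpha,\beta$ must be chosen as field elements, which raises the same valuation-lifting issue as above. With these two repairs your outline does match the known proof; without them it does not go through.
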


Let us now recall what definable Skolem functions are. A structure $M$ has definable Skolem functions if every definable set admits a definable section. A definable set $X\subseteq M^{n+1}$ has a definable section if there is a definable function $g:\pi(X)\to M$ such that $(x,g(x))\in X$ for all $x\in \pi(X)$, where $\pi$ denotes the projection of $M^{n+1}$ onto the first $n$ coordinates.   

Recently, Darni\`ere and Halupczok \cite{darniereETAL:2015} characterized $P$-minimal structures having cell preparation over the class of continuous definable functions using an additional condition called ``the extreme value property''. This property requires that every continuous definable function from a closed and bounded definable set $X \subseteq K$ to $\Gamma_K$ attains a maximal value. Their theorem can be stated as follows: 

\begin{theorem}[Darni\`ere-Halupczok]\label{thm:DH} Let $(K,\cL)$ be a $P$-minimal field. The following are equivalent:
\begin{enumerate}
\item $K$ has definable Skolem functions and satisfies the extreme value property;  
\item $K$ has function preparation over the class of continuous definable functions. 
\end{enumerate}
\end{theorem}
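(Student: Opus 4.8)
The plan is to route both implications through Mourgues' theorem (Theorem \ref{thm:mourgues}), treating the extreme value property as the extra ingredient that separates a mere cell decomposition from full function preparation.

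For the implication (2) $\Rightarrow$ (1), first note that function preparation over the continuous definable functions is in particular a cell decomposition over that class (as observed after Definition \ref{def:celldecomp}), so Theorem \ref{thm:mourgues} immediately yields definable Skolem functions. It then remains to derive the extreme value property. Given a continuous definable $f : X \to \Gamma_K$ with $X \subseteq K$ closed and bounded, I would use the Skolem functions just obtained to pick a definable lift $g : X \to K$ with $v \circ g = f$, and apply function preparation to $g$. This partitions $X$ into finitely many cells on each of which $v(g(t)) = v(\delta) + \frac{a(v(t-c)-v(\lambda))}{n}$ is an affine, hence monotone, function of $\gamma := v(t-c)$, with $\delta, c \in K$ and $a \in \ZZ$ fixed. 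The supremum of $f$ over a single cell is then governed by the extremal value of $\gamma$: boundedness of $X$ bounds $\gamma$ from below, closedness of $X$ forces the relevant boundary point into $X$ (and into some cell), while continuity of $f$ rules out the only escaping case, namely $a>0$ with $\gamma$ unbounded above (there $t\to c$ would give $f\to\infty$, contradicting continuity at the limit point $c\in X$). Taking the largest of the finitely many cell maxima produces a global maximum.

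For (1) $\Rightarrow$ (2), Theorem \ref{thm:mourgues} already supplies a cell decomposition over continuous definable functions, so the task is to upgrade decomposition to preparation. Fixing $x$, each map $t \mapsto f_j(x,t)$ is a definable function of one variable and hence, by $P$-minimality together with Denef-style one-variable cell decomposition, admits a center $c(x)$ after which $v(f_j(x,t))$ depends affinely on $v(t - c(x))$. The real work is to make this data uniform and \emph{definable and continuous} in the parameter $x$: I would use definable Skolem functions to select the centers $c$ and the cosets $\lambda P_n$ definably, and the extreme value property to guarantee that the functions $\alpha,\beta$ cutting out the cells and the functions $\delta_j$ recording the leading valuations are given by genuine continuous definable functions rather than by suprema that are only approached. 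A compactness argument, legitimate here since $P$-minimality is preserved under elementary equivalence, then lets me pass from the pointwise one-variable statement to a single uniform finite partition.

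The main obstacle I expect is precisely this uniform, continuous realization of the preparation data in the direction (1) $\Rightarrow$ (2): producing continuous definable centers together with the functions $\delta_j$, and isolating exactly where the extreme value property is indispensable. Heuristically, the extreme value property is what rules out ``escaping suprema'', that is, definable families of valuations whose fibrewise bound is never attained; such behaviour would obstruct the boundary functions $\alpha, \beta$ and the $\delta_j$ from being simultaneously definable and continuous. It is this failure that the example constructed later in the paper will exploit to exhibit a $P$-minimal field possessing definable Skolem functions yet lacking function preparation.
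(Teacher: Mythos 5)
First, a point of comparison: the paper does not prove Theorem \ref{thm:DH} at all --- it is quoted as background from \cite{darniereETAL:2015} --- so your attempt has to be judged on its own merits rather than against an internal proof. Your direction (2) $\Rightarrow$ (1) is essentially sound: preparation implies decomposition, Theorem \ref{thm:mourgues} then gives definable Skolem functions, and the extreme value property follows by lifting $f:X\to\Gamma_K$ to a definable $g$ with $v\circ g=f$, preparing $g$, and analyzing $f$ cell by cell as a monotone function of $\gamma=v(t-c)$. Two details you should make explicit: the set of values $\gamma$ realized on a cell is an interval intersected with a coset of $n\Gamma_K$, and you need that $\Gamma_K$ is a $\ZZ$-group (so that any such nonempty set bounded above, respectively below, attains a maximum, respectively minimum); and the continuity contradiction in the case $a>0$ with no upper bound on $\gamma$ uses that the order topology on $\Gamma_K$ is discrete. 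With those points added, that half works.

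The genuine gap is in (1) $\Rightarrow$ (2), exactly at the step you dispatch with ``$P$-minimality together with Denef-style one-variable cell decomposition''. Denef's preparation \cite{denef-86} applies to \emph{semi-algebraic} functions. In a $P$-minimal structure $(K,\cL)$, $P$-minimality only forces definable subsets of $K^1$ to be $\Lring$-definable; the function $t\mapsto f_j(x,t)$ is a subset of $K^2$ and need not be semi-algebraic, so nothing off the shelf gives you that $v(f_j(x,t))$ is piecewise of the form $v(\delta)+a(v(t-c)-v(\lambda))/n$. Proving this valuation-theoretic behaviour for arbitrary $\cL$-definable functions --- and producing the centers $c$ and coefficient functions $\delta_j$ uniformly, definably and continuously in $x$ --- is precisely the content of the Darni\`ere--Halupczok theorem, and it is where the extreme value property must actually be consumed; in your sketch EVP appears only as a heuristic about ``escaping suprema'' and never enters a concrete proof step, while the concluding ``compactness argument'' for uniformity is asserted rather than carried out. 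So the hard direction is begged, not proved. A final side remark: the example of Section 2 is a $P$-minimal field \emph{without} definable Skolem functions (Theorem \ref{main}); it is not, as your last sentence suggests, a structure that has Skolem functions but fails preparation.
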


The existence of $P$-minimal structures without definable Skolem functions was left open in both \cite{mou-09} and \cite{darniereETAL:2015}. In the next section we present a $P$-minimal field $(K,\cL_A)$ where $K$ is a non-standard $p$-adically closed field and the language $\cL_A$ is $\Lring\cup \{A\}$ with $A$ a binary predicate interpreted in $K$ by an $\cL_{an}$-definable subset. The result confirms the intuition that the existence of definable Skolem functions in $P$-minimal structures is not preserved under taking reducts. Despite this negative result, it is worth noting that variants of cell decomposition results have been proven for general $P$-minimal structures by widening the notion of cell (see for example \cite{cub-dar-lee:15, cubi-leen-2015}).

\section{The example}

As stated in the previous section, $(\QQ_p,\cL_{an})$ is $P$-minimal. Let $(K,\cL_{an})$ be a non-standard elementary extension of $(\QQ_p,\cL_{an})$ and let $\rho\in \Gamma_K$ be such that $\rho>n$ for all $n\in\ZZ$. Let $f:\ZZ_p\to\ZZ_p$ be a transcendental convergent power series with coefficients in $\ZZ_p$. Consider the following set of $\cO_K^2$:
\[
A:=\{(x,y)\in \cO_K^2: v(f(x)-y)>\rho\}. 
\]
Abusing notation, let $A$ be a new binary relation symbol and $\cL_A=\Lring\cup\{A\}$. We set $(K,\cL_A)$ as the expansion of $(K,\Lring)$ where $A$ is interpreted as the set $A$ defined above. Notice that every $\cL_A$-definable set is in particular $\cL_{an}$-definable and, since $(K,\cL_{an})$ is $P$-minimal, we trivially get

\begin{proposition} $(K,\cL_A)$ is $P$-minimal. 
\end{proposition}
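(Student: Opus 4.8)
The plan is to exploit that the relation $A$ is already $\cL_{an}$-definable, so that $(K,\cL_A)$ is, passing through the expansion $(K,\cL_{an}\cup\{A\})$, a reduct of the $P$-minimal structure $(K,\cL_{an})$, and then to argue that $P$-minimality descends to such reducts. Concretely I would split the argument into two steps: (i) check that $A$ is an $\cL_{an}$-definable subset of $\cO_K^2$; and (ii) show that if $(K,\cL)$ is $P$-minimal and $\Lring\subseteq\cL_0\subseteq\cL$, then $(K,\cL_0)$ is again $P$-minimal. The phrase ``we trivially get'' really refers to step (ii), which deserves a word of justification because $P$-minimality is a property of the whole elementary class, not of the single structure $K$.

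For step (i) I would let $\hat f$ denote the $\cL_{an}$ function symbol attached to the convergent power series $f$; its interpretation in the elementary extension $(K,\cL_{an})$ agrees with $f$ on $\cO_K$. Since $\Gamma_K=v(K^\times)$, there is $\pi\in K^\times$ with $v(\pi)=\rho$ (that $\rho$ is infinite plays no role here; it only matters later for Skolem functions). As $\cO_K$ is $\Lring$-definable in any $p$-adically closed field and the relation $v(a)>v(b)$ is $\Lring$-definable in $a,b$, one may write
\[
A=\{(x,y)\in K^2: x\in\cO_K,\ y\in\cO_K,\ v(\hat f(x)-y)>v(\pi)\},
\]
which is an $\cL_{an}$-formula in $(x,y)$ with parameter $\pi$. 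Hence $A$ is $\cL_{an}$-definable. Since the sentence $\exists w\,\forall x\,\forall y\,\bigl(A(x,y)\leftrightarrow(x,y\in\cO_K\wedge v(\hat f(x)-y)>v(w))\bigr)$ then belongs to $\mathrm{Th}_{\cL_{an}\cup\{A\}}(K)$, in every model of that theory the symbol $A$ is interpreted by an $\cL_{an}$-definable set; combined with $P$-minimality of $(K,\cL_{an})$ this shows $(K,\cL_{an}\cup\{A\})$ is $P$-minimal.

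For step (ii) I would apply this with $\cL=\cL_{an}\cup\{A\}$ and $\cL_0=\cL_A$. The point to be careful about is that in an arbitrary $(K',\cL_A)\equiv(K,\cL_A)$ the symbol $A$ is interpreted by some abstract set, which one cannot a priori identify with an analytic set; so one cannot literally repeat ``every $\cL_A$-definable set is $\cL_{an}$-definable'' inside $K'$. The remedy is uniform definability via compactness. Using that $(K,\cL_{an}\cup\{A\})$ is $P$-minimal, for every $\cL_A$-formula $\phi(x,\bar y)$ there are finitely many $\Lring$-formulas $\psi_1(x,\bar z_1),\dots,\psi_m(x,\bar z_m)$ such that the sentence
\[
\sigma_\phi:\quad \forall\bar y\ \bigvee_{i=1}^m\exists\bar z_i\ \forall x\,(\phi(x,\bar y)\leftrightarrow\psi_i(x,\bar z_i))
\]
belongs to $\mathrm{Th}_{\cL_{an}\cup\{A\}}(K)$. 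Since $\phi$ is an $\cL_A$-formula and the $\psi_i$ are $\Lring$-formulas, $\sigma_\phi$ is an $\cL_A$-sentence; being true in $(K,\cL_{an}\cup\{A\})$ it is true in the reduct $(K,\cL_A)$, hence in every $(K',\cL_A)\equiv(K,\cL_A)$. Thus each set $\phi(K';\bar b)$ coincides with some $\psi_i(K';\bar c)$ and is $\Lring$-definable, which is exactly $P$-minimality of $(K,\cL_A)$.

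The only genuine obstacle is the uniform-definability argument of step (ii): one must upgrade the pointwise statement ``$\phi(\cdot,\bar b)$ is $\Lring$-definable in every model'' into a single first-order sentence $\sigma_\phi$ that can be transferred across elementary equivalence. This is the standard compactness argument (its negation produces a model elementarily equivalent to $(K,\cL_{an}\cup\{A\})$ carrying a subset of the line definable by no $\Lring$-formula, contradicting $P$-minimality), and it is precisely what makes rigorous the author's remark that $P$-minimality of $(K,\cL_A)$ follows ``trivially''.
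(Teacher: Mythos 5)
Your proof is correct and takes essentially the same route as the paper: the paper's one-line argument is exactly your step (i) (every $\cL_A$-definable set is $\cL_{an}$-definable, so $(K,\cL_A)$ is a reduct of a $P$-minimal structure) together with the descent of $P$-minimality to reducts containing $\Lring$, which the paper dismisses as ``trivial.'' Your compactness argument in step (ii) is a correct and rigorous filling-in of that step---it is the standard uniformity argument behind Haskell--Macpherson's characterization of $P$-minimality---so the two proofs agree in substance, with yours simply making explicit what the paper leaves implicit.
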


We will show that $(K,\cL_A)$ has no definable Skolem functions. To prove this we use the following two lemmas whose proofs are postponed to the the end of the section. 

\begin{lemma}\label{lem:aprox} For every $a\in \cO_K$ there are $\tilde{a}\in \cO_K$ and $b\in \QQ_p$ such that $a=\tilde{a}+b$ and $v(\tilde{a})>n$ for every $n\in \ZZ$. 
\end{lemma}

\begin{lemma}\label{lem:zeroset} Let $g:W\subseteq K\to K$ be an $\cL_A$-definable function. Then there is a polynomial $P(X,Y)\in K[X,Y]$ such that for all $x\in W$, $P(x,g(x))=0$. 
\end{lemma}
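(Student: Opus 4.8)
The plan is to pass to a monster model, prove that $g(x)$ is algebraic over the field generated by $x$ and the parameters of $g$ \emph{pointwise}, and then glue this information into a single polynomial. For each fixed bidegree $(a,b)$ the statement that the graph of $g$ lies on a curve of bidegree $\le(a,b)$ over $K$ is an $\cL_A$-sentence over the parameters $\bar c$ of $g$ (quantify over the finitely many coefficients in $K$); since such a sentence will be verified for a specific $(a,b)$ upstairs, it descends to $(K,\cL_A)$. I therefore work inside a monster model $\mathbb{K}\succ(K,\cL_{an})$, on which $A$ retains its analytic meaning $\{(x,y)\in\cO_{\mathbb K}^2:v(f(x)-y)>\rho\}$, so that $(K,\cL_A)\preceq(\mathbb K,\cL_A)$. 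Granting the pointwise claim $g(x)\in\operatorname{acl}_{\Lring}(\bar c,x)$ for all $x\in W$, I would finish as follows: analytic cell decomposition (available since $\cL_A\subseteq\cL_{an}$ and $(K,\cL_{an})$ is $P$-minimal) partitions $W$ into finitely many cells on each of which $g$ agrees with an analytic function $h$. On an infinite cell $C_i$, choose $x^{*}$ not $\cL_{an}$-algebraic over $\QQ_p(\bar c)$ and take $P_i\in\QQ_p(\bar c)[X,Y]\setminus\{0\}$ witnessing the algebraicity of $h(x^{*})$ over $\QQ_p(\bar c,x^{*})$; the set $\{x\in C_i:P_i(x,h(x))=0\}$ is defined over $\QQ_p(\bar c)$ and contains $x^{*}$, so it cannot be finite, hence by $P$-minimality it has nonempty interior, and analyticity of $h$ then forces $P_i(x,h(x))=0$ on all of $C_i$. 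Multiplying these finitely many $P_i$ together with linear factors $(X-x_j)$ for the finitely many point-cells yields the desired $P$.

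The heart of the matter is the pointwise claim, and the idea is an automorphism argument exploiting that $A$ is an infinitesimal band of width $\rho$. The key observation is that any field automorphism $\sigma$ of $\mathbb K$ which is \emph{$\rho$-close to the identity} on $\cO_{\mathbb K}$, i.e.\ $v(\sigma z-z)>\rho$ for all $z\in\cO_{\mathbb K}$, automatically preserves $A$: such a $\sigma$ fixes every value $\le\rho$ (in particular $\rho$) and preserves $\cO_{\mathbb K}$, and since $f$ has coefficients in $\cO_K$ one has $v(f(\sigma x)-f(x))\ge v(\sigma x-x)>\rho$, whence $f(\sigma x)-\sigma y=(f(x)-y)+(\text{an element of value}>\rho)$ and therefore $A(\sigma x,\sigma y)\Leftrightarrow A(x,y)$. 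Thus every such $\sigma$ fixing $\bar c$ is an $\cL_A$-automorphism of $\mathbb K$ over $\bar c$. Now fix $x_0\in W$, put $y_0:=g(x_0)$ and $S:=\bar c\cup\{x_0\}$, and suppose toward a contradiction that $y_0\notin\operatorname{acl}_{\Lring}(S)$. If I can produce a field automorphism $\sigma$ of $\mathbb K$ fixing $S$, $\rho$-close to the identity on $\cO_{\mathbb K}$, with $\sigma(y_0)\ne y_0$, then $\sigma$ is an $\cL_A$-automorphism over $\bar c$ fixing $x_0$, so $\sigma(y_0)=\sigma(g(x_0))=g(x_0)=y_0$, a contradiction; hence $y_0\in\operatorname{acl}_{\Lring}(S)$.

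The main obstacle is therefore the construction of a $\rho$-close automorphism moving a transcendental element. The mechanism I expect to use is a back-and-forth over $F:=\operatorname{acl}_{\Lring}(S)$ inside the saturated field $\mathbb K$, maintaining at every stage a finite partial $\Lring$-elementary map that is $\rho$-close to the identity. The initial step sends $y_0$ to some $y_1\ne y_0$ obtained, by saturation, as a realization of $\operatorname{tp}_{\Lring}(y_0/F)$ with $v(y_1-y_0)>\rho$; such $y_1$ exists precisely because $y_0$ is transcendental over $F$, so that finite approximations of this type can be met by $\rho$-small perturbations of $y_0$. The delicate point at each extension step is to match a prescribed $\Lring$-type by an element lying in the correct $\rho$-ball: conditions of value $\le\rho$ are robust under $\rho$-small perturbations, while conditions of value $>\rho$ merely record position within $\rho$-balls, which the coarse data (at level $\le\rho$) leaves free for transcendental elements. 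In other words, the coarsening of $\mathbb K$ at level $\rho$ is already respected by $\rho$-closeness, and the residual freedom inside each $\rho$-ball is enough to run the back-and-forth and extend the map to a global automorphism in this group. I emphasise that the transcendence of $f$ plays no role in this lemma — the band $A$ is preserved by $\rho$-close automorphisms whatever $f$ is — and that transcendence is reserved for the subsequent deduction, via Lemma~\ref{lem:aprox} and standard parts, that an algebraic function cannot serve as a Skolem function for $A$.
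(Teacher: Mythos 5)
Your reductions at the top and bottom of the argument are sound, and they chart a genuinely different, semantic route from the paper's syntactic one: passing to a monster model, observing that the pointwise claim $g(x)\in\operatorname{acl}_{\Lring}(\bar c,x)$ would suffice, and checking that a field automorphism of $\mathbb{K}$ which is $\rho$-close to the identity on $\cO_{\mathbb{K}}$ preserves $A$ (your Lipschitz estimate for $f$ is the same one the paper uses to prove Lemma~\ref{lem:bidef}). The gap is that the entire difficulty of the lemma has been concentrated in the one step you do not prove: the existence, when $y_0\notin\operatorname{acl}_{\Lring}(F)$, of an automorphism fixing $F$, moving $y_0$, and $\rho$-close to the identity on all of $\cO_{\mathbb{K}}$. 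Homogeneity of the monster gives an automorphism over $F$ moving $y_0$, but with no control whatsoever on the rest of $\cO_{\mathbb{K}}$; and your proposed back-and-forth invariant --- finite partial $\Lring$-elementary maps that are additively $\rho$-close --- does not support the extension step. The trouble appears exactly where an element $\alpha$ algebraic over the current domain $D$ must be added: its image is then forced, up to finitely many conjugates, by elementarity, and whether any conjugate lands in the ball $v(z-\alpha)>\rho$ is governed by root-separation data (discriminants) of polynomials over $D$, which can live at scales far beyond $\rho$ and which additive $\rho$-closeness on $D$ does not constrain. For instance, if $D$ contains $u=2\alpha+\epsilon$ and $w=\alpha^2+\alpha\epsilon$ with $v(\alpha)=0$ and $v(\epsilon)=10\rho$, but not $\alpha$ or $\epsilon$, then nothing in your invariant prevents images $u+du,\,w+dw$ with $v(du),v(dw)$ just above $\rho$ that move the discriminant $u^2-4w=\epsilon^2$ from value $20\rho$ to a value just above $\rho$; both roots of the perturbed polynomial $X^2-(u+du)X+(w+dw)$ then sit at distance of value about $\rho/2$ from $\alpha$, and the partial map is a dead end. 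The condition that does propagate is the multiplicative one --- that $\sigma$ induce the identity on $\RV_\rho=\mathbb{K}^\times/(1+\cM_\rho)$ --- maintained on generated subfields (this is what protects discriminants, since they are then themselves elements of the domain, constrained multiplicatively). But proving that this invariant can be carried through a full back-and-forth is, in substance, the relative quantifier elimination with respect to the $\RV$-sorts that the paper invokes as Proposition~\ref{RQE} (Basarab, Kuhlmann, Flenner, Rideau). Your sentence ``the residual freedom inside each $\rho$-ball is enough to run the back-and-forth'' is therefore not a proof sketch; it is a restatement of the theorem that has to be proved.

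There is a second, more repairable, gap in the gluing step. Analytic cell decomposition, in the form available here, is a preparation theorem for valuations of definable functions; it does not literally assert that $g$ coincides with an analytic function on each cell, and even granting piecewise analyticity, your identity-theorem step (vanishing on a set with nonempty interior forces vanishing on the whole cell) requires justification in a non-standard model, where Strassmann-type finiteness of zeros does not automatically apply to functions built from terms with non-standard parameters. A cleaner finish from the pointwise claim: by compactness in the monster there are finitely many $\Lring$-formulas over $\bar c$, each with uniformly finite fibers, whose union covers the graph of $g$; algebraic boundedness of $p$-adically closed fields converts each of these into a nonzero polynomial over $\QQ(\bar c)$ vanishing on the corresponding piece, and the product of these polynomials is the desired $P$. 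This repair, however, is beside the main point: without a proof of the automorphism-existence claim, the pointwise claim --- and with it the lemma --- remains unproven, whereas the paper obtains it by combining Proposition~\ref{RQE} with Lemma~\ref{lem:bidef} and a dimension count.
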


\begin{theorem}\label{main} The structure $(K,\cL_A)$ does not have definable Skolem functions. 
\end{theorem}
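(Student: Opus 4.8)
The plan is to exhibit a single $\cL_A$-definable set $X\subseteq K^2$ whose projection admits no $\cL_A$-definable section, and the natural candidate is the predicate $A$ itself. For each fixed $x\in\cO_K$, the fibre $A_x=\{y\in\cO_K:v(f(x)-y)>\rho\}$ is a nonempty ball around $f(x)$, so $\pi(A)=\cO_K$ and a definable section would be a definable function $g:\cO_K\to K$ with $v(f(x)-g(x))>\rho$ for every $x$. The whole argument is then to derive a contradiction from the existence of such a $g$, using Lemma~\ref{lem:zeroset} to constrain $g$ algebraically and Lemma~\ref{lem:aprox} to exploit the transcendence of $f$.

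Concretely, suppose $g$ is such a section. First I would apply Lemma~\ref{lem:zeroset} to obtain a nonzero polynomial $P(X,Y)\in K[X,Y]$ with $P(x,g(x))=0$ for all $x\in\cO_K$. The strategy is to combine this with the defining inequality $v(f(x)-g(x))>\rho$ to show that $f$ must satisfy an algebraic relation over $K$ in a suitable sense, contradicting that $f$ is transcendental. The key point is that $g(x)$ is infinitesimally close to $f(x)$ at the scale $\rho$ (which dominates $\ZZ$), so on the standard part $g(x)$ and $f(x)$ should be indistinguishable: more precisely, for a \emph{standard} $a\in\ZZ_p\subseteq\cO_K$, Lemma~\ref{lem:aprox} lets me write an arbitrary $x\in\cO_K$ as $x=\tilde a + b$ with $b\in\QQ_p$ and $v(\tilde a)>n$ for all $n\in\ZZ$, separating the standard and infinitesimal parts of the argument of $f$.

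The heart of the argument, and the step I expect to be the main obstacle, is transferring the algebraic relation $P(x,g(x))=0$ together with $g(x)\approx f(x)$ into an algebraic relation satisfied by $f$ on $\ZZ_p$. The idea is that since $v(f(x)-g(x))>\rho$ is larger than every integer valuation, the polynomial identity $P(x,g(x))=0$ forces $v(P(x,f(x)))>\rho$ as well (by expanding $P(x,g(x))$ around $f(x)$ and estimating each term, using that $P$ has coefficients in $K$ and that the higher-order differences in $Y$ are controlled by $v(f(x)-g(x))$). One must be careful that the coefficients of $P$ may themselves be non-standard; here I would either reduce to the case where $P$ has bounded valuation coefficients after clearing denominators, or invoke an elementary-equivalence/standard-part argument to descend the inequality $v(P(x,f(x)))>\rho$ to the statement that $P(x,f(x))$ has arbitrarily large valuation for all standard $x\in\ZZ_p$, hence is identically zero on $\ZZ_p$ by analyticity and density. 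That would make $f$ algebraic over $K$, and, after tracking the coefficients back to $\QQ_p$ using Lemma~\ref{lem:aprox} to eliminate the non-standard parameters, algebraic over $\QQ_p$, contradicting the transcendence of $f$.

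Finally I would conclude that no definable section $g$ exists, so $X=A$ witnesses the failure of definable Skolem functions in $(K,\cL_A)$. The delicate bookkeeping lies entirely in the valuation estimate that propagates $P(x,g(x))=0$ to $P(x,f(x))\equiv 0$ on $\ZZ_p$ despite the non-standard coefficients of $P$ and the non-standard element $\rho$; once that estimate is secured, the transcendence of $f$ closes the argument immediately.
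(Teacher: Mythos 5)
Your overall route coincides with the paper's: take $A$ itself as the witness set, use Lemma \ref{lem:zeroset} to produce $P\in K[X,Y]$ with $P(x,g(x))=0$ on $\cO_K$, propagate this to the estimate $v(P(x,f(x)))>\rho$, and then combine Lemma \ref{lem:aprox} with the transcendence of $f$ to reach a contradiction. The estimate does go through as you sketch, but only after $P$ is normalized so that all coefficients lie in $\cO_K$ and at least one has valuation exactly $0$ (divide by a coefficient of minimal valuation); your phrase ``bounded valuation coefficients after clearing denominators'' should be sharpened to this, since both the term-by-term estimate and, later, the non-vanishing of the standard part of $P$ depend on it.

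The genuine flaw is in the final step, where you argue that $v(P(x,f(x)))>\rho$ for all $x$ forces $P(x,f(x))$ to be ``identically zero on $\ZZ_p$ by analyticity and density,'' making $f$ algebraic over $K$, and only afterwards descend the coefficients to $\QQ_p$. This inference is invalid: the whole point of the construction is that $K$ contains nonzero infinitesimals, i.e.\ elements whose valuation exceeds every integer, so an element of $K$ of valuation $>\rho$ need not be zero, and no appeal to analyticity or density can rule this out, because for non-standard coefficients the function $x\mapsto P(x,f(x))$ takes values in $K$ rather than $\QQ_p$. High valuation implies vanishing only for elements of $\QQ_p$, whose value group is $\ZZ$. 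The repair is to reverse your last two steps, which is exactly what the paper does: apply Lemma \ref{lem:aprox} to the coefficients \emph{first}, writing $P=P_0+\tilde P$ where $P_0\in\QQ_p[X,Y]$ collects the standard parts $b(i,j)$ and $\tilde P$ has all coefficients of valuation above $\ZZ$; the normalization guarantees $P_0\neq 0$. Then for $x\in\ZZ_p$ the value $P_0(x,f(x))$ lies in $\QQ_p$, so its valuation is in $\ZZ\cup\{\infty\}$, while both $P(x,f(x))$ and $\tilde P(x,f(x))$ have valuation above $\ZZ$; hence $P_0(x,f(x))=0$, and now transcendence of $f$ gives the contradiction. (The paper phrases this contrapositively: by transcendence, $P_0(x,f(x))\neq 0$ for all but finitely many $x\in\ZZ_p$, and at such an $x$ one gets $v(P(x,f(x)))=v(P_0(x,f(x)))\in\ZZ$, contradicting the claim that this valuation exceeds $\rho$.) Your proposal names all the right ingredients, but as written the key inference would fail, and the coefficient-splitting is not an optional bookkeeping device appended at the end — it is the step that makes the conclusion possible at all.
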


\begin{proof} Suppose for a contradiction that $g:\cO_K\to K$ is a definable Skolem function for $A$, that is, for all $x\in \cO_K$, $v(f(x)-g(x))>\rho$. Notice that since $v(f(x))\geq 0$ and $v(f(x)-g(x))>\rho$ we must have that $g(x)\in \cO_K$. By Lemma \ref{lem:zeroset}, let $P\in K[X,Y]$ be such that $P(x,g(x))=0$ for all $x\in \cO_K$ with 
\[
P(X,Y)=\sum_{(i,j)} a(i,j)X^iY^j
\]
for $a(i,j)\in K$ and $0\leq i\leq j\leq N$ for some $N\in \NN$. Let $(i_0, j_0)$ be such that $v(a(i_0, j_0))=\min (v(a(i, j)))$. Without loss of generality we may assume that $v(a(i_0, j_0))=0$, since we can multiply $P(X,Y)$ by $\frac{1}{a(i_0, j_0)}$. 

\begin{claim}\label{bigger} For all $x\in \cO_K$, $v(P(x,f(x)))>\rho$. 
\end{claim}

Since $P(x,g(x))=0$, we have that $P(x,f(x))=P(x,f(x))-P(x,g(x))$, then

\begin{align*}
v(P(x,f(x))) & = v\left( \sum_{(i, j)}a(i, j)x^i(f(x))^j -\sum_{(i, j)}a(i, j)x^i(g(x))^j\right) \\
			& =  v\left(\sum_{(i, j)\neq (0,0)}a(i,j)x^i((f(x))^j-(g(x))^j)\right) \\
			& \geq  \min_{(i, j)\neq (0,0)}\{ v(a(i,j)x^i((f(x))^j-(g(x))^j))\}\\
			& =  \min_{(i, j)\neq (0,0)} \{v(a(i,j)x^i(f(x)-g(x))((f(x))^{j-1}+\cdots+(g(x))^{j-1}))\}\\
			& >  \min_{(i, j)\neq (0,0)} \{v(a(i,j)x^i((f(x))^{j-1}+\cdots+(g(x))^{j-1}))\}+\rho\\
			& \geq \rho, \\
\end{align*}
which completes the claim. 

\

We will show that there is $x\in \ZZ_p$ such that $v(P(x,f(x)))\in \ZZ$, contradicting the claim since by assumption $\rho>n$ for all $n\in\ZZ$. First split the set of indices $(i,j)$ in $P$ as follows: 
\[
I:=\{(i,j): \exists n\in \ZZ, v(a(i,j))< n\}, \text{ and }
\]
\[
J:=\{(i,j): \forall n\in \ZZ, v(a(i,j))> n\}. 
\]
By Lemma \ref{lem:aprox}, for $(i,j)\in I$, let $\tilde{a}(i,j)\in K$ and $b(i,j)\in \QQ_p$ be such that $a(i,j)=\tilde{a}(i,j)+b(i,j)$ and $v(\tilde{a}(i,j))>n$ for every integer $n$. 

\begin{claim} For all but finitely many $x\in \QQ_p$ 
\[
v(P(x,f(x)))=v\left(\sum_{(i,j)\in I}b(i,j)x^i(f(x))^j\right)\neq\infty.
\]  
\end{claim}

First notice that for $x\in \QQ_p$
\[
v\left(\sum_{(i,j)\in I}a(i,j)x^i(f(x))^j\right)=v\left(\sum_{(i,j)\in I}\tilde{a}(i,j)x^i(f(x))^j + \sum_{(i,j)\in I}b(i,j)x^i(f(x))^j\right).
\]
Since $v(a(i_0,j_0))=0$ we must have $b(i_0,j_0)\neq 0$. By definition of $I$ and $J$ we have that either $\sum_{(i,j)\in I}b(i,j)x^i(f(x))^j=0$ or $\sum_{(i,j)\in I}b(i,j)x^i(f(x))^j\neq0$ and
\[
v\left(\sum_{(i,j)\in I}a(i,j)x^i(f(x))^j\right)=v\left(\sum_{(i,j)\in I}b(i,j)x^i(f(x))^j\right)<v\left(\sum_{(i,j)\in J}a(i,j)x^i(f(x))^j\right).
\]
Since $f$ is transcendental and $b(i_0,j_0)\neq 0$, we have that $\sum_{(i,j)\in I}b(i,j)x^i(f(x))^j=0$ occurs only for finitely many $x\in\QQ_p$, which shows the claim. 

\

Take $x\in \ZZ_p$ such that  $\sum_{(i,j)\in I}b(i,j)x^i(f(x))^j\neq0$. Then, 
\[
v(P(x,f(x)))=v\left(\sum_{(i,j)\in I}b(i,j)x^i(f(x))^j\right)\in \ZZ, 
\]
since both $x$ and all $b(i,j)$ are in $\QQ_p$. 
\end{proof}

We are now left with the proof of Lemmas \ref{lem:aprox} and \ref{lem:zeroset}. For Lemma \ref{lem:aprox} we use pseudo-Cauchy sequences (pseudo-convergent in Kaplansky's \cite{kaplansky1942}, to which we refer the reader for definitions and basic properties). 

\begin{proof}[Proof of Lemma \ref{lem:aprox}] Let $a$ be an element of $K$. If $v(a)>n$ for all $n\in \ZZ$ set $b=0$. Suppose $v(a)\in \ZZ$. Let $(b_i)_{i\in \NN}$ be a pseudo Cauchy sequence of elements in $\QQ_p$ such that $a$ is a pseudo-limit. By completeness of $\QQ_p$ there is $b\in\QQ_p$ which is also a pseudo-limit of that sequence. Set $\tilde{a}:=a-b$. By definition of pseudo Cauchy sequence, for every $n\in\ZZ$ there is $i\in \NN$ such that both $v(a-a_i)>n$ and $v(b-a_i)>n$, which shows $v(a-b)>n$. 
\end{proof}

The proof of Lemma \ref{lem:zeroset} is a bit more involved. It is based on \emph{resplendent quantifier elimination}, a notion coined by Scanlon (see \cite{Scanlon99}) which can be traced back to the work of Pas on relative quantifier elimination for henselian valued fields (see \cite{pas-89}). Let us informally explain what this notion means. For a formal exposition we refer the reader to \cite{rideau15}. 

We will work in a multi-sorted extension $\cL_{\RV^*}^K$ of $\Lring$ (which will be defined later) for which the theory $T=Th(K,\cL_{\RV^*}^K)$ will \emph{relatively eliminate valued field quantifiers}. Denote by $\VF$ the valued field sort. For $T$ to relatively eliminate $\VF$-quantifiers means that every $\cL_{\RV^*}^K$-formula is equivalent modulo $T$ to a $\VF$-quantifier free $\cL_{\RV^*}^K$-formula (which might still have quantifiers for variables in sorts different from $\VF$). We say a language $\cL$ extends $\cL_{\RV^*}^K$ \emph{resplendently over $\VF$} if only new relation and function symbols are added whenever they do not involve the sort $\VF$. Finally, $T$ eliminates $\VF$-quantifiers \emph{resplendently over $\VF$}, if the elimination of $\VF$-quantifiers also holds for any language $\cL$ extending $\cL_{\RV^*}^K$ resplendently over $\VF$. This will be the content of Proposition \ref{RQE}. Let us now introduce the language $\cL_{\RV^*}^K$. We will use the notation choice from \cite{flenner2011}.  

Given $\delta\geq 0$ in $\Gamma_K$, let $\cM_\delta$ denote the ideal $\{x\in K: v(x)>\delta\}$. The $\RV_\delta$ structure is the quotient group
\[
\RV_\delta:=K^\times/(1+\cM_\delta),
\] 
and $\rv_\delta:K^\times\to \RV_\delta$ is the quotient map. We include an element $\infty$ in $\RV_\delta$ and extend $\rv_\delta$ to $K$ setting $rv_\delta(0)=\infty$. Given $\gamma,\delta\in \Gamma_K$ such that $\delta\leq\gamma$, we also denote by $\rv_{\delta}$ the natural map $\rv_\delta:\RV_\gamma\to\RV_\delta$. A partial sum is induced in $RV_\delta$ as the following ternary relation:
\[
\oplus(a_1,a_2,a_3) \Leftrightarrow \exists x_1,x_2,x_3\in K\left(\bigwedge_{i=1}^3 rv_\delta(x_i)=a_i \wedge x_1+x_2=x_3\right).
\]
Let $\cL_0$ be the language $\{\times,\oplus\}$. 
The multi-sorted language $\cL_{\RV^*}^K$ is given by 
\[
\cL_{\RV^*}^K:=
\begin{cases}
(\VF,\Lring)  \\
\text{$(RV_\delta,\cL_{0})$ for each $\delta\geq 0$ in $\Gamma_K$}  \\
\text{$rv_\delta:VF\to RV_\delta$ for each $\delta\geq 0$ in $\Gamma_K$} \\
\text{$rv_\delta:RV_\gamma\to RV_\delta$ for all $0\leq\delta\leq \gamma$ in $\Gamma_K$}. \\
\end{cases}
\]
We use the notation $(K,\cL_{\RV^*}^K)$ for the whole $\cL_{\RV^*}^K$-structure on $K$. Notice that the set of sorts is fixed by $\Gamma_K$. Resplendent relative elimination of $\VF$-quantifiers for the theory of $(K,\cL_{\RV^*}^K)$ is based on relative quantifier elimination results by Basarab \cite{basarab:91}, Kuhlmann \cite{kuhlmann:94} and a more recent accounts by Flenner \cite{flenner2011} and Rideau \cite{rideau15}. 

\begin{proposition}\label{RQE} Let $\cL$ be a language extending $\cL_{\RV^*}^K$ resplendently over $\VF$. Then any $\cL$-formula is equivalent modulo $Th(K,\cL)$ to an $\cL$-formula without $\VF$-quantifiers. 
\end{proposition}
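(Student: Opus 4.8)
The plan is to establish Proposition~\ref{RQE} as a resplendent strengthening of known relative quantifier elimination results for henselian valued fields. The target statement asserts that for any language $\cL$ obtained from $\cL_{\RV^*}^K$ by adjoining relation and function symbols that do not involve the sort $\VF$, every $\cL$-formula is equivalent modulo $Th(K,\cL)$ to one without $\VF$-quantifiers. First I would recall the non-resplendent base case: by the cited results of Basarab and Kuhlmann (and their modern treatments by Flenner and Rideau), the theory of $(K,\cL_{\RV^*}^K)$ already relatively eliminates $\VF$-quantifiers, that is, the statement holds for $\cL=\cL_{\RV^*}^K$ itself. The content of the proposition is that this elimination procedure is insensitive to enrichment of the $\RV_\delta$ sorts, since those sorts are orthogonal to the valued field sort with respect to the elimination.

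The approach I would take is to argue via a back-and-forth / embedding criterion. Working in a large saturated model $\cM \models Th(K,\cL)$, it suffices to show that for any two small submodels $M_1, M_2 \preceq \cM$ and any partial $\cL$-isomorphism $\sigma:M_1\to M_2$ whose restriction to the $\RV$-sorts is elementary (in the enriched language on those sorts), $\sigma$ can be extended across $\VF$-elements; by a standard criterion this extension property is equivalent to the desired $\VF$-quantifier elimination. The key point is that the base elimination result for $\cL_{\RV^*}^K$ is itself proved by such an extension-of-embeddings argument, in which every $\VF$-extension step is controlled entirely by data living in the $\RV_\delta$ sorts (valuations, angular components, leading terms). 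Concretely, I would invoke the relative quantifier elimination in the formulation where the $\VF$-sort is eliminated \emph{down to} the $\RV$-sorts, so that the only hypothesis needed on $\sigma$ is that it be a partial elementary map on the $\RV$-part.

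The crucial observation making resplendency work is that the new symbols of $\cL$ live purely on the $\RV_\delta$ sorts and therefore only constrain the $\RV$-part of the structure. Since the hypothesis of the embedding criterion already requires $\sigma$ to be elementary (hence an $\cL$-isomorphism on the $\RV$-sorts, as the new symbols are interpreted there), and since the $\VF$-extension step in the base proof never refers to any structure on $\RV_\delta$ beyond $\{\times,\oplus\}$ and the maps $\rv_\delta$, the same extension goes through verbatim in $\cL$. In other words, the $\VF$-by-$\RV$ decomposition of types is preserved: a $\VF$-type over $M_1$ is determined by its pushforward data in the $\RV$-sorts together with the field structure, and the enrichment touches none of the field-sort machinery.

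The main obstacle I expect is making precise and verifying that the base elimination result is available in a genuinely \emph{resplendent} form rather than merely as a statement about the fixed language $\cL_{\RV^*}^K$. This requires that the elimination be stated as an extension property depending only on the $\RV$-sorts being elementarily matched, which is exactly the form in which Flenner \cite{flenner2011} and Rideau \cite{rideau15} phrase relative quantifier elimination; I would therefore cite their formulations directly and check that the continuum of sorts $\{RV_\delta:\delta\geq 0\}$ together with the transition maps $\rv_\delta:RV_\gamma\to RV_\delta$ poses no difficulty, the system being an inverse system of groups with compatible partial sums. Once the base result is cast as an $\RV$-controlled extension-of-embeddings statement, the resplendent version is immediate, since adding symbols over $\VF$ that avoid the $\VF$-sort changes neither the hypotheses nor the conclusion of the extension step.
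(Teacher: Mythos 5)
Your plan is correct, but it follows a different route from the one the paper's proof primarily gestures at. The paper's proof is essentially a citation offering three options: (a) a careful analysis of Flenner's proof of his Proposition 4.3, (b) iterative Pas--Denef (Cohen-style) preparation of the polynomials occurring in a formula, and (c) the resplendent framework of \cite{rideau15}, which the authors explicitly omit as requiring too long an exposition. Your embedding-criterion/back-and-forth argument in a saturated model is essentially route (c), in the tradition of the Basarab \cite{basarab:91} and Kuhlmann \cite{kuhlmann:94} embedding-theoretic proofs; it is \emph{not} an accurate description of route (a): Flenner's proof is syntactic, via one-variable cell decomposition in the Cohen--Denef style, and is not phrased as an extension-of-embeddings statement, so your assertion that this is ``exactly the form in which Flenner and Rideau phrase relative quantifier elimination'' holds only for Rideau. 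What your semantic route buys is that resplendency becomes transparent: enriching the $\RV_\delta$-sorts merely strengthens the elementarity hypothesis on the $\RV$-part of a partial map, while the $\VF$-extension steps of the base proof use nothing beyond $\cL_0$ and the maps $\rv_\delta$. What it costs is precisely the set-up the paper chose to avoid: the criterion must be formulated for partial maps defined on substructures (not elementary submodels, as you write) preserving all $\VF$-quantifier-free formulas --- which amounts to your hypothesis only because substructures are closed under the maps $\rv_\delta$ --- and saturation must be handled in a language with one sort $\RV_\delta$ for every $\delta\geq 0$ in $\Gamma_K$, in mixed characteristic. Note finally that, like the paper's own proof, your argument is not self-contained: the key verification that the base $\VF$-extension steps are blind to extra $\RV$-structure is deferred to the cited literature, so your proposal matches, but does not exceed, the paper's level of completeness.
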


\begin{proof}
This natural extension of Proposition 4.3 in \cite{flenner2011} follows from a careful analysis of the proof given in \cite{flenner2011}. Alternatively, one can follow the classical techniques from Pas and Denef \cite{pas-89, denef-86} (which were inspired by methods of Cohen) and iteratively apply semi-algebraic preparation to the polynomials involved in a given $\cL$-formula in order to eliminate $\VF$-quantifiers. Finally, the result also follows from recent techniques introduced in \cite{rideau15}, which we omit since they will require a much longer exposition.  
\end{proof}

In our case, we use this resplendence to have a better control of $\cL_A$-definable functions in one variable. Consider the image of $A$ by $\rv_\rho$, that is, 
\[
rv_\rho(A)=\{(a,b)\in RV_\rho^2: \exists x,y\in K(rv_\rho(x)=a\wedge rv_\rho(y)=b\wedge (x,y)\in A\}.
\]
For $H$ a new binary symbol let $\cL_H:=\cL_0\cup\{H\}$. Finally let $\cL$ be the extension of $\cL_{\RV^*}^K$ in which for $\rho$ we replace $\cL_0$ by $\cL_H$ in the sort $\RV_\rho$. We set $(K,\cL)$ as the expansion of $(K,\cL_{\RV^*}^K)$ in which $H$ is interpreted as $\rv_\rho(A)$. The two structures are related as follows:

\begin{lemma}\label{lem:bidef}
For every $n$, every $\cL_A$-definable subset of $K^n$ is also $\cL$-definable. 
\end{lemma}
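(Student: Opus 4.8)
The plan is to translate every $\cL_A$-formula into an $\cL$-formula by induction on its structure. Since $\cL$ contains $\Lring$ on the sort $\VF$, the only symbol of $\cL_A$ that requires explanation is the binary predicate $A$; once $A$ is shown to be $\cL$-definable, the inductive step over connectives and quantifiers is routine, and we conclude that every $\cL_A$-definable subset of $K^n$ is $\cL$-definable. I would therefore concentrate on establishing the equivalence
$$A(x,y)\ \Longleftrightarrow\ x\in\cO_K\wedge y\in\cO_K\wedge H(\rv_\rho(x),\rv_\rho(y)),$$
whose right-hand side is an $\cL$-formula: recall that $\cO_K$ is $\Lring$-definable, that $\rv_\rho\colon\VF\to\RV_\rho$ is a symbol of $\cL$, and that $H$ is interpreted as $\rv_\rho(A)$.

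The forward direction is immediate from the definitions: if $(x,y)\in A$ then $x,y\in\cO_K$, and $(\rv_\rho(x),\rv_\rho(y))\in\rv_\rho(A)=H$ by taking $x,y$ themselves as witnesses. For the converse the crucial point is a $\rv_\rho$-invariance property of $A$: I claim that for $x,x',y,y'\in\cO_K$ with $\rv_\rho(x)=\rv_\rho(x')$ and $\rv_\rho(y)=\rv_\rho(y')$ one has $(x,y)\in A$ if and only if $(x',y')\in A$. Granting this, if $x,y\in\cO_K$ and $(\rv_\rho(x),\rv_\rho(y))\in\rv_\rho(A)$, one picks witnesses $(x',y')\in A$ with the same $\rv_\rho$-classes and deduces $(x,y)\in A$ by invariance.

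To prove the invariance claim, observe that $\rv_\rho(x)=\rv_\rho(x')$ with $x,x'\in\cO_K$ gives $x-x'\in x'\cdot\cM_\rho$, so $v(x-x')>\rho$, and likewise $v(y-y')>\rho$. The main work is the Lipschitz-type estimate $v(f(x)-f(x'))\geq v(x-x')$ for $x,x'\in\cO_K$, and this is exactly where the integrality of the coefficients of $f$ enters. Writing $f(T)=\sum_k c_kT^k$ with $c_k\in\ZZ_p$, the divided difference $D(s,t)=\sum_{i,j}c_{i+j+1}s^it^j$ is again a convergent power series with coefficients in $\ZZ_p$, hence defines an $\cL_{an}$-function $\cO_K^2\to\cO_K$ satisfying $f(s)-f(t)=(s-t)D(s,t)$; therefore $v(f(x)-f(x'))=v(x-x')+v(D(x,x'))\geq v(x-x')>\rho$. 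Combining, the element $(f(x)-y)-(f(x')-y')=(f(x)-f(x'))-(y-y')$ has valuation $>\rho$, so by the ultrametric inequality $v(f(x)-y)>\rho$ if and only if $v(f(x')-y')>\rho$, which is precisely the invariance claim. The main obstacle is this analytic estimate, whose validity rests on the coefficients of $f$ lying in $\ZZ_p$ and on transferring the integrality of $D$ from $\QQ_p$ to $K$ through the $\cL_{an}$-structure; once it is in place, everything else is the bookkeeping of the inductive translation of $\cL_A$-formulas into $\cL$.
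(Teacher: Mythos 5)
Your proof is correct and follows essentially the same route as the paper: both reduce the lemma to showing that $A$ equals the $\rv_\rho$-pullback of $H$, with the converse inclusion resting on the Lipschitz-type estimate $v(f(x)-f(x'))\geq v(x-x')$ coming from the integrality of the coefficients of $f$. The only difference is cosmetic: the paper asserts that estimate as ``easy to see'' and transfers it from $\ZZ_p$ to $K$, whereas you supply the divided-difference argument explicitly.
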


\begin{proof}
Let $X$ be a subset of $K^n$. If $X$ is defined by an $\cL_A$-formula $\phi$, the $\cL$-formula $\psi$ which arises from replacing uniformly the predicate $A$ by $\rv_\rho^{-1}(H)$ also defines $X$. Indeed, $A = \rv_\rho^{-1} (H)$. The left-to-right inclusion is trivial. For the converse, let $(z,w)\in \rv_\rho^{-1}(H)$ and $(x,y)\in A$ such that $\rv_\rho(x)=\rv_\rho(z)$ and $\rv_\rho(y)=\rv_\rho(w)$. This implies that $v(z-x)>\rho$ and $v(y-w)>\rho$, so in particular $(z,w)\in\cO_K^2$. It remains to show that $v(f(z)-w)>\rho$. By definition of $A$ we have that $v(f(x)-y)>\rho$, which by the ultrametric inequality implies that $v(f(x)-w)>\rho$. Again, by the ultrametric inequality, it suffices to show that $v(f(x)-f(z))>\rho$. Since $f$ (in $\ZZ_p$) is defined by a convergent power series with coefficients in $\ZZ_p$, it is easy to see that $v(x-z)\leq v(f(x)-f(z))$ for all $x,z\in \mathbb{Z}_p$, and hence such statement also holds in $K$. Thus, since $v(x-z)>\rho$ we have that $v(f(x)-f(z))>\rho$. Therefore $(z,w)\in A$. 
\end{proof}

The final ingredient in the proof of Lemma \ref{lem:zeroset} is the well-behavior of dimension for definable sets in $P$-minimal fields. The dimension of a definable set $X\subseteq K^n$ is the maximal non-negative integer $k\leq n$ such that there is a projection $\pi:K^n\to K^k$ for which $\pi(X)$ has non-empty interior. For $X=\emptyset$ we set its dimension as $-\infty$. Notice that a definable set is finite if and only if it has dimension less or equal than 0. Given definable sets $X_1,\ldots,X_n$, it was proven in \cite{haskellETAL:10} that 
\[
\dim(X_1\cup\cdots\cup X_n)=\max (\dim(X_i):1\leq i\leq n).
\]
It also follows from the results in \cite{haskellETAL:10} that the dimension is additive: given definable sets $S\subseteq K^n$ and $X\subseteq S\times K^m$ such that the projection of $X$ onto the first $n$-coordinates equals $S$, and such that fibers have fixed dimension $\dim(X_s)=l\leq m$ for all $s\in S$, one has that $\dim(X)=\dim(S)+l$ (see \cite{cub-dar-lee:15}). In particular, the graph of a function $f:X\subseteq K^n\to K$ must have dimension less than $n+1$. For $X_1,\ldots,X_n$ open definable sets in $K^n$, the ultrametric inequality imposes furthermore that $\dim(X_1\cap\cdots \cap X_n)$ is either $n$ or $-\infty$. We have now all ingredients to prove Lemma \ref{lem:zeroset}:

\begin{proof}[Proof of Lemma \ref{lem:zeroset}] Let $G$ be the graph of $g$. By Lemma \ref{lem:bidef}, $G$ is definable by an $\cL$-formula $\phi(x,y)$. By Proposition \ref{RQE}, $\phi$ is equivalent to an $\cL$-formula of the form 
\[
\xi(x,y):=\bigvee_{i\in I} \bigwedge_{j\in J} P_{w}(x,y)=0 \wedge Q_{w}(x,y)\neq 0 \wedge  \theta_{w}(t_{1}^w,\ldots, t_{n_w}^w)
\]
where $w\in I\times J$, $P_{w}, Q_{w}\in K[X,Y]$, $\theta_{w}(x_1,\ldots, x_{n_w})$ is an $\cL$-formula where all variables range over $\RV$ sorts and each term $t_{l}^w$ is of the form 
\[
t_l^w=\rv_{\gamma_{w,l}}(F_{w,l}(x,y)), \text{ with  $\gamma_{w,l}\in \Gamma_K$ and $F_{w,l}\in K[X,Y]$}.
\] 
Notice the result follows if for each $i\in I$ there exists  $j\in J$ such that $P_{(i,j)}\neq 0$. For any polynomial $F\in K[X,Y]$ and any $\delta\geq 0$ in $\Gamma_K$, we have that 
\[
rv_{\delta}(F(x,y))=\infty\Leftrightarrow F(x,y)=0.
\] 
Hence, possibly by replacing the formula $\xi(x,y)$ by an equivalent formula, we may assume that $\theta_w(t_1^w,\ldots, t_{n_w}^w)$ defines a set of dimension 2 or $-\infty$ in $K^2$. Indeed, if $\theta_w(t_1^w,\ldots, t_{n_w}^w)$ defines a set of dimension 1, such a set is already contained in the union of the zero sets of the polynomials $F_{w,l}$.

Let $i\in I$ be such that the formula 
\[
\xi_i(x,y):=\bigwedge_j P_{(i,j)}(x,y)=0 \wedge Q_{(i,j)}(x,y)\neq 0 \wedge \theta_{(i,j)}(t_{1}^{(i,j)},\ldots, t_{n_{(i,j)}}^{(i,j)}),
\] 
defines a non-empty subset of $K^2$. Suppose for a contradiction that for all $j\in J$, the polynomial $P_{(i,j)}$ is the zero polynomial. By our assumption, every formula $\theta_{(i,j)}$ defines a subset of either dimension 2 or dimension $-\infty$ in $K^2$. Formulas of the form $Q_{(i,j)}(x,y)\neq 0$ always define a subset of dimension 2 in $K^2$. Therefore $\xi_i(x,y)$ defines a subset of dimension $2$ or $-\infty$. Since $\xi_i$ defines a non-empty set, its dimension must be 2. This implies $G$ has dimension 2, but since $G$ is the graph of a function, by additivity it must have dimension 1, a contradiction. So for each $i\in I$, there exists $j\in J$ with $P_{(i,j)}$ not the zero polynomial. 
\end{proof}

Let us finish with a remark and two questions. Our proofs used essentially that the $p$-adically closed field $(K,\cL_A)$ of Theorem \ref{main} is non-standard. Moreover, despite such structure does not have Skolem functions, it has a $P$-minimal expansion that does have, namely $(K,\cL_{an})$. This two facts naturally induce the following questions: 

\begin{question} Does any $P$-minimal expansion of $\QQ_p$ (or a finite extension) has definable Skolem functions? 
\end{question}

\begin{question} Does every $P$-minimal field has an expansion with definable Skolem functions?
\end{question}

\subsection*{Acknowledgment} 
Special thanks to Raf Cluckers for many helpful discussions and suggestions. We also thank the referee for his/her useful comments and Eva Leenknegt and Silvain Rideau for many valuable exchanges. The research leading to these results has received funding from the European Research Council,
ERC Grant nr. 615722, MOTMELSUM, 2014--2019. The first author also partially supported by the ERC project TOSSIBERG (grant agreement 637027).

\bibliographystyle{acm}

\bibliography{biblioV2}

\end{document}